\documentclass[12pt]{amsart} 
\usepackage{amsfonts}
\usepackage{amsmath}
\usepackage{amssymb}
\usepackage{amsthm}
\usepackage{graphicx}
\usepackage[T1]{fontenc}

\usepackage{hyperref}

\newtheorem{theorem}{Theorem}

\newtheorem{observation}[theorem]{Observation}
\newtheorem{corollary}[theorem]{Corollary}
\newtheorem{problem}[theorem]{Problem}
\newtheorem{claim}[theorem]{Claim}

\begin{document}
	\title{On Strong Majority Edge Colourings with Few Colours}
\thanks{Research partially supported by the AGH University of Krakow under grant no. 16.16.420.054 and Excellence Initiative – Research University AGH University of Krakow, funded by the Polish Ministry of Science and Higher Education.}
	
	\author{Pawe{\l} P\k{e}ka{\l}a}
	\address[agh]{AGH University of Krakow, al. A. Mickiewicza 30, 30-059 Krakow, Poland}
	\email{\tt ppekala@agh.edu.pl}
	
	\author{Jakub Przyby{\l}o}
	\thanks{Corresponding author: Jakub Przyby{\l}o.}
	\address[agh]{AGH University of Krakow, al. A. Mickiewicza 30, 30-059 Krakow, Poland}
	\email{\tt jakubprz@agh.edu.pl}

\begin{abstract}
A strong majority edge colouring of a graph $G$ is an edge colouring in which, for every edge $e$ and every colour $\alpha$, at most half the edges adjacent to $e$ receive colour $\alpha$. Like many related colouring notions, it admits a natural interpretation as a colouring problem for an associated hypergraph. Somewhat surprisingly, although the corresponding hypergraph may have arbitrarily large vertex degrees, a universal finite upper bound on the sufficient number of colours in a strong majority edge colouring exists under a natural modest minimum degree assumption, unlike in several other closely related majority concepts. 

We in particular prove that every graph $G$ with minimum degree $\delta\ge5$ admits a strong majority edge colouring with three colours, improving both the previously known bound $\delta\ge9$ for three colours and the result showing that four colours suffice whenever $\delta\ge5$. 
Our result is best possible with respect to the number of colours and the minimum degree assumption.

We also introduce a more general framework of strong $1/k$-majority edge colourings and establish corresponding bounds for this setting.
\end{abstract}
	
			\maketitle

	\section{Introduction}

Majority colourings belong to the broad family of improper colourings in which
local conflicts are permitted but their extent is restricted. Instead of requiring
certain adjacent objects (in particular, vertices or edges) to receive distinct
colours, one imposes a weaker condition: no colour may constitute a majority within
the local neighbourhood of any considered object. A number of variants of this
concept have attracted considerable attention in recent years.

For vertex colourings, a colouring is called a \emph{majority colouring} if every
vertex has at most half of its neighbours in its own colour. The roots of this
notion can be traced back to Lov\'asz's decomposition theorem~\cite{Lovasz},
which immediately implies that every finite graph admits a majority colouring
with only two colours. The corresponding concept for infinite graphs is closely
related to the unfriendly partition problem, for which several fundamental
existence results have been established over the years
~\cite{Aharoni,Berger,Bruhn,KalinowskiStawiski}.

More recently, the majority condition has been investigated in several other
natural settings. Kreutzer, Oum, Seymour, van der Zypen and Wood~\cite{digraph1}
introduced majority colourings of digraphs, where the above majority restriction
is imposed only on the out-neighbours of each vertex. They proved that four
colours always suffice and conjectured that three colours should be enough. See
also~\cite{Szabo-majority,digraph2,MajorityGeneralOur,digraph3,Knox-Samal} for
related results and generalisations.

Already in~\cite{digraph1}, the authors observed that no fixed finite number of
colours is sufficient under a stronger requirement that \emph{no} colour dominates the
entire out-neighbourhood of any vertex $v$, rather than just the colour assigned
to $v$ itself, even when restricted to digraphs with arbitrarily large minimum
out-degree (note such a colouring clearly cannot exist for digraphs containing
a vertex of out-degree one); see Lemma~5 in~\cite{digraph1}. An analogous
negative statement holds for simple graphs by essentially the same argument; see
e.g.~\cite{strong_kkpw}, where the corresponding notion of
\emph{strong majority vertex colouring} was formally introduced.

A variant of such stronger notion was earlier deployed by
Bock \emph{et al.}~\cite{Bock} in the setting of edge colourings. They defined a
\emph{majority edge colouring} as an edge colouring in which no vertex has a
majority of its incident edges receiving the same colour; note that such a
colouring cannot exist if a graph contains a vertex of degree $1$.
They proved, in particular, that every graph $G$ with minimum degree
$\delta\geq 2$ admits a majority edge $4$-colouring, and every graph with
$\delta\geq 4$ admits a majority edge $3$-colouring. This is best possible,
since neither the number of colours nor the minimum-degree assumption can be
reduced in a result of this form; see~\cite{Bock,majority_gen,majority_list}
for details and related generalisations.

Concepts discussed above can also be viewed as instances of a more general
hypergraph colouring problem. Suppose we wish to colour the vertices of a
hypergraph so that no edge contains a majority of vertices of the same colour.
Then the strong vertex colouring of a graph $G=(V,E)$ corresponds to colouring a
hypergraph $H$ whose vertex set is $V$ and whose edges are the neighbourhoods of
the vertices of $G$. Analogously, a majority edge colouring of a graph
$G=(V,E)$ corresponds to colouring a hypergraph $H'$ whose vertex set is $E$ and
whose each edge consists of all edges  incident with a single vertex in $G$.
As mentioned above, there is no constant number of colours sufficient for all
hypergraphs $H$ arising in this way, even if one restricts attention to graphs
$G$ with minimum degree bounded from below. On the other hand, a small constant
number of colours suffices for all hypergraphs $H'$, provided that we impose a
modest lower bound on the minimum degree of $G$, which corresponds to a lower
bound on the edge sizes in $H'$. This difference stems from the fact that every
vertex of $H'$, generated for the latter problem, has degree~$2$, whereas vertex degrees in 
auxiliary hypergraphs $H$, from the former problem,
may be arbitrarily large. In fact, a fairly standard probabilistic argument,
generalising the one from~\cite{Bock} for graphs, yields a more general
conclusion: a finite number of colours suffices for every hypergraph whose
maximum degree is bounded above by a fixed constant, provided that all edges of the hypergraph are
sufficiently large. This also follows from recent results on majority colourings
of hypergraphs obtained in~\cite{Majoity-hypergraphs}, after applying them to
the duals of hypergraphs one is interested in.

A very intriguing notion of strong majority edge colouring was recently
introduced by Kalinowski, Kamyczura, Pil\'sniak and Wo\'zniak~\cite{strong_kkpw}.
A \emph{strong majority edge colouring} of a graph $G$ is an edge colouring such
that for every edge $e$ and every colour $\alpha$, at most half of the edges adjacent
to $e$ receive colour~$\alpha$. The least number of colours in such a colouring is
denoted by $\operatorname{Maj}'(G)$.
One may easily observe that, in this setting, the natural auxiliary hypergraph
$H''$, defined analogously as $H, H'$ above, may have arbitrarily large vertex degrees.
Nevertheless, rather surprisingly, structural properties of this concept
admit establishing a constant upper bound on the required number of colours
for graphs without small-degree vertices.
More specifically, note first that a strong majority edge colouring exists if and only if $G$
does not contain a pendant path of length greater than one, as observed in~\cite{strong_kkpw}. 
Graphs satisfying this condition are called \emph{admissible}.
The authors of~\cite{strong_kkpw} proved that every admissible graph $G$
satisfies $\operatorname{Maj}'(G)\leq 8$, which in particular implies that
$8$ colours are sufficient for every graph with minimum degree
$\delta\geq 2$. They also improved this bound to $4$ colours when
$\delta\geq 7$ and to $3$ colours when $\delta\geq 9$. Recently,
Antoniuk, Prorok and Salia~\cite{prorok} strengthened the first two results by
showing that $5$ colours suffice for every admissible graph and that
$\operatorname{Maj}'(G)\leq 4$ whenever $\delta\geq 5$. In this paper, we
further improve and supplement these results by proving that $3$ colours are
already sufficient whenever $\delta\geq 5$; see
Theorem~\ref{MainTheoremPP} below. This matches the smallest possible number of
colours in such a theorem and  the best possible minimum-degree
assumption, since $\delta$ cannot be reduced below~$5$; see
Section~\ref{ConclRem} for details.

In the next section, we introduce the necessary notation and recall several
useful observations. The main result together with its proof is presented in
Section~\ref{MainSect}. In Section~\ref{GeneralisationSect}, we introduce
generalised strong majority edge colourings, following directions similar to those
investigated in
~\cite{digraph1,MajorityGeneralOur,digraph3,Knox-Samal,Bock}, and establish
the corresponding generalisation of
Theorem~\ref{strong_kkpw_main}. The final section contains several concluding
remarks and comments.

	\section{Preliminaries}
	
	To prove some results we shall use the notion of $\alpha$-majority edge colourings, introduced in~\cite{Bock} 
	and further investigated in \cite{majority_list}. 
	For every graph $G=(V,E)$, we say that  $\omega:E\to C$ is an \emph{$\alpha$-majority edge colouring} if $d_c(v)\leq \alpha d(v)$ for every $v\in V$ and $c\in C$, where $d_c(v)$ denotes the number of edges incident with $v$ which are coloured with $c$ by $\omega$. 
	The following result from~\cite{majority_list} shall be useful. 
	
	\begin{theorem}[\cite{majority_list}]\label{thm_galvin_alpha}
		For any integer $\ell\geq 2$ and $\alpha \in (0,1)$ such that $\alpha \ell > 1$, if a graph $G$ has a minimum degree $\delta \geq \frac{2\ell-2}{\alpha \ell-1}$, then $G$ has an $\alpha$-majority edge colouring from lists of size $\ell$.
	\end{theorem}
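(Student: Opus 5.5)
The plan is a local-improvement argument: among all colourings $\omega$ of $E$ from the lists, take one minimising a suitable potential, and show that any violation of the $\alpha$-majority condition gives a single-edge recolouring that strictly lowers the potential. The first candidate is the degree-normalised quadratic potential
\[
\Phi(\omega)=\sum_{v\in V}\frac{1}{d(v)}\sum_{c}d_c(v)^2 .
\]
Suppose $d_c(v)>\alpha d(v)$ for some vertex $v$ and colour $c$, and let $e=uv$ be an edge coloured $c$. Recolouring $e$ to $c'\in L(e)\setminus\{c\}$ changes $\Phi$ by
\[
2\Bigl(\tfrac{d_{c'}(v)-d_c(v)+1}{d(v)}+\tfrac{d_{c'}(u)-d_c(u)+1}{d(u)}\Bigr).
\]

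Next, average over the $\ell-1$ available colours $c'$. Since $\sum_{c'\neq c}d_{c'}(w)\le d(w)-d_c(w)$ for $w\in\{u,v\}$, some $c'$ gives a strict decrease provided
\[
\ell\Bigl(\tfrac{d_c(v)}{d(v)}+\tfrac{d_c(u)}{d(u)}\Bigr)>2+(\ell-1)\Bigl(\tfrac{1}{d(v)}+\tfrac{1}{d(u)}\Bigr).
\]
We know $d_c(v)/d(v)>\alpha$ and $d_c(u)\ge 1$, and $\delta\ge\frac{2\ell-2}{\alpha\ell-1}$ is equivalent to $\frac{2(\ell-1)}{\delta}\le \alpha\ell-1$. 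The intended bookkeeping is that the $v$-side supplies $\alpha\ell$ and the $+1$ corrections cost at most $2(\ell-1)/\delta$ in total. Since $\Phi$ takes finitely many values, a minimiser then exists and has no violations.

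The main obstacle is exactly this inequality. When $d_c(u)=1$, the naive computation only leaves about $\alpha\ell>2+\frac{\ell-1}{d(v)}$, which is too weak for $\alpha$ close to $1/\ell$. The cause is that the uncolouring at $u$ brings almost no gain while the new colour may be heavy at $u$.

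My first fix is to change the potential so that only overfull classes are penalised. For example, use $\sum_v\sum_c \max\{0,d_c(v)-\alpha d(v)\}$ together with a tie-breaking quadratic term, or a convex function that is flat below the threshold $\alpha d(v)$. Then adding an edge to a light class at $u$ costs nothing, and the averaging needs only that some $c'$ is light at both ends. That follows from a counting bound: the number of colours heavy at $w$ is less than $1/\alpha$. The total slack available is $\alpha\ell-1$, and the $2(\ell-1)/\delta$ term accounts for rounding at the two endpoints.

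If this still falls short, my fallback is to Euler-orient $G$ first, adding a dummy vertex joined to the odd-degree vertices, so that in- and out-degrees are $\lceil d(v)/2\rceil$. I would then run the averaging argument separately on the out-edges and in-edges of each vertex. Here the factor $2$ and the term $\ell-1$ in $\frac{2\ell-2}{\alpha\ell-1}$ would come from the halving and from the $\ell-1$ alternative colours.
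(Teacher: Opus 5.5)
The paper does not prove this statement; it quotes it from \cite{majority_list}. Judged on its own, your proposal has a genuine gap. It does not handle the regime $1<\alpha\ell\le 2$, which is exactly where the paper applies the theorem ($\ell=3$ with $\alpha$ just below $1/2$; $\ell=k+1$ with $\alpha$ just below $1/k$).

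You correctly diagnose why the quadratic potential fails, but your first fix hits the same wall. The step ``some $c'\in L(e)\setminus\{c\}$ is light at both ends'' does not follow from the counting bound. About $1/\alpha-1$ alternatives can be full at $v$ and about $1/\alpha$ at $u$, against only $\ell-1$ available, so pigeonholing needs roughly $\alpha\ell>2$. In fact, whenever $\alpha(\ell-1)<1$ (both applications above), the endpoint $u$ alone can block every alternative. Let each colour of $L(uv)\setminus\{c\}$ occur exactly $\lfloor\alpha d(u)\rfloor$ times at $u$, and let $c$ occur the remaining $d(u)-(\ell-1)\lfloor\alpha d(u)\rfloor\ge 1$ times. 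Any recolouring of $uv$ then changes your excess term by $\lfloor\alpha d(u)\rfloor+1-\alpha d(u)-\min\{1,d_c(v)-\alpha d(v)\}$. This is positive whenever $d_c(v)-\alpha d(v)<\lfloor\alpha d(u)\rfloor+1-\alpha d(u)$. If all $c$-edges at $v$ look like this, no move at the violation lowers your potential, and the tie-breaker never comes into play.

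Your fallback inherits the same defect. An arc $\overrightarrow{uv}$ is an out-edge of $u$ and an in-edge of $v$ at the same time, so separating out- and in-edges does not decouple the two endpoints. Your account of where the $2$ and the $\ell-1$ in the bound come from is asserted rather than derived.

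The missing ingredient is Galvin's theorem: every bipartite multigraph $B$ is edge-choosable from lists of size $\Delta(B)$. Orient $G$ arbitrarily. At each $v$, cut the out-arcs into $\lceil d^+(v)/\ell\rceil$ groups and the in-arcs into $\lceil d^-(v)/\ell\rceil$ groups, each of size at most $\ell$. Let $B$ have these groups as vertices, with out-groups on one side and in-groups on the other, each arc $\overrightarrow{uv}$ joining its out-group at $u$ to its in-group at $v$. Since $\Delta(B)\le\ell$, Galvin's theorem gives a proper colouring of $B$ from the lists $L(e)$. Hence each colour occurs at most once per group, and
\[
d_c(v)\le\Bigl\lceil\frac{d^+(v)}{\ell}\Bigr\rceil+\Bigl\lceil\frac{d^-(v)}{\ell}\Bigr\rceil\le\frac{d(v)+2\ell-2}{\ell}\le\alpha d(v),
\]
where the last inequality is equivalent to $d(v)\ge\frac{2\ell-2}{\alpha\ell-1}$. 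So the $2$ counts the two sides of $B$, and each $\ell-1$ is the rounding loss of one ceiling; no Euler orientation is needed. Single-edge recolouring cannot supply this global list-colouring step. Already for identical lists, a proper $\Delta$-edge-colouring of a bipartite graph needs alternating-path swaps. For arbitrary lists such swaps are unavailable, which is why Galvin's kernel argument is the tool here.
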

	
	To simplify the notation, we shall say that $\omega:E\to C$ is an \emph{$(1/2-\varepsilon)$-majority edge colouring} if $d_c(v)< d(v)/2$ for every $v\in V$ and $c\in C$. For any given graph $G$ with maximum degree $\Delta$ this is in particular equivalent to an $\alpha$-majority edge colouring for any $\alpha=1/2-\varepsilon$ with $\varepsilon \in \left(0; \frac{1}{2\Delta}\right]$.
	
	Note that a majority colouring is not necessarily also a strong majority colouring. For example, if $e=uv$ is coloured with $\alpha$, exactly $d(u)/2$ edges incident with $u$ and exactly $d(v)/2$ edges incident with $v$ are coloured with $\beta \neq \alpha$, then the edge $e$ is not strong majority coloured. However, if for every vertex $w$ and each colour $\alpha$, strictly less than $d(w)/2$ edges incident with $w$ are coloured with $\alpha$, 
	then for any edge $e=uv$ at most $(d(u)+d(v)-2)/2$ edges adjacent to $e$ may have the same colour. Hence, every $(1/2-\varepsilon)$-majority edge colouring is also a strong majority colouring. 
To exemplify utility of this observation as well as Theorem~\ref{thm_galvin_alpha}, we first briefly present how these imply the following result from~\cite{strong_kkpw}, mentioned above.
	
	\begin{theorem}[\cite{strong_kkpw}]\label{strong_kkpw_main}
		If $G$ is a graph with minimum degree $\delta \geq 9$, then
		\[ \operatorname{Maj}'(G) \leq 3. \]
		
		\begin{proof}
			Let $G$ be a graph with maximum degree $\Delta$ and minimum degree $\delta \geq 9$.
			Let $\alpha = \frac{1}{2} - \frac{1}{6\Delta}$. Then, $3\alpha > 1$ and
			\[ \frac{2\cdot 3-2}{\alpha \cdot 3-1} = \frac{4}{\frac{1}{2}-\frac{1}{2\Delta}} \leq \frac{4}{\frac{1}{2}-\frac{1}{18}} = 9 \leq \delta. \]
			Hence, by Theorem \ref{thm_galvin_alpha}, $G$ has an $\alpha$-majority edge colouring with $3$ colours, and thus $\operatorname{Maj}'(G)\leq 3$.
		\end{proof}
	\end{theorem}

These tools shall be used again in Section~\ref{GeneralisationSect}.
	To prove our main result we shall in turn in particular exploit the following observation, which is a direct consequence of Euler's Theorem, see e.g.~\cite{Bock} for details of the proof.
	
	\begin{observation}
		\label{euler}
		Let $G$ be a connected graph.
		\begin{enumerate}
			\item[$(1^\circ)$] If $G$ has an even number of edges or $G$ contains vertices of odd degree, then 
			$G$ has an edge $2$-colouring such that 
			for every vertex $u$ of $G$, at most $\lceil d(u)/2 \rceil$ of the edges incident with $u$ have the same colour.
			\item[$(2^\circ)$] If $G$ has an odd number of edges, all vertices of $G$ have even degrees and $u_G$ is any fixed vertex of $G$, then $G$ has an edge $2$-colouring such that 
			for every vertex $u$ of $G$ distinct from $u_G$, exactly $d_G(u)/2$ of the edges incident with $u$ have the same colour, and at most $1+d(u_G)/2$ of the edges incident with $u_G$ have the same colour.
		\end{enumerate}
	\end{observation}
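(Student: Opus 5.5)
The plan is to run the standard Euler-tour argument; this is the proof cited from \cite{Bock}. We may assume $G$ has at least one edge, since otherwise there is nothing to prove. We add an auxiliary vertex to make all degrees even, take an Eulerian circuit, and colour its edges alternately with the two colours. Each time the circuit passes through a vertex, it uses one edge of each colour. So the colours can be unbalanced only at the start/end vertex of the circuit, and that is the one place that needs care.

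For $(1^\circ)$, first suppose $G$ has vertices of odd degree; there is an even number of them. We add a new vertex $x$ joined to every odd-degree vertex, producing a connected graph $G'$ in which every degree is even. We take an Eulerian circuit of $G'$ starting and ending at $x$ and colour its edges alternately. Every passage of the circuit through a vertex $u\neq x$ uses two consecutive edges, which receive different colours. Hence in $G'$ the vertex $u$ has exactly $d_{G'}(u)/2$ edges of each colour. Deleting $x$ removes at most one edge at $u$, and only when $d_G(u)$ is odd. In that case each colour appears at most $(d_G(u)+1)/2=\lceil d_G(u)/2\rceil$ times at $u$; otherwise each colour appears exactly $d_G(u)/2$ times.

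Next suppose all degrees in $G$ are even and $|E|$ is even. We take an Eulerian circuit of $G$ itself, starting at some vertex $w$, and colour alternately. Because the circuit has even length, its first and last edges receive different colours. So the wrap-around at $w$ is balanced too, and every vertex, including $w$, sees exactly $d(u)/2$ edges of each colour.

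For $(2^\circ)$ the degrees are all even and $|E|$ is odd. We take an Eulerian circuit starting and ending at the prescribed vertex $u_G$ and colour alternately. Now every internal passage through any vertex is balanced, while at $u_G$ the first and last edges share a colour, since the length is odd. So every $u\ne u_G$ has exactly $d(u)/2$ edges of each colour. At $u_G$ the counts are $d(u_G)/2+1$ and $d(u_G)/2-1$, which gives the bound $1+d(u_G)/2$.

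The only delicate point is the bookkeeping at the circuit's endpoint, which explains the case split. In the odd-degree case we root the circuit at $x$, which is deleted afterwards. In the even-length case parity makes the endpoint balanced. In the odd-length case the imbalance is forced, and choosing the start vertex lets us place it at $u_G$.
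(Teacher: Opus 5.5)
Your proof is correct and is the standard Euler-circuit argument the paper has in mind; the paper gives no proof itself, only calling the statement a direct consequence of Euler's Theorem and pointing to Bock et al. The argument is the usual one: add a vertex adjacent to all odd-degree vertices and root the circuit there, or otherwise use the parity of the circuit length to control the imbalance at the start vertex, placing it at $u_G$ in case $(2^\circ)$, where the counts $d(u_G)/2\pm 1$ follow.
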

	
	In further applications, a \textit{bad vertex} shall mean a vertex of $G$ which was chosen as the vertex $u_G$ while applying Observation~\ref{euler}, that is the vertex with exactly $1+d(v)/2$ incident edges coloured the same in one of the two colours.

			\section{Main result}\label{MainSect}
	
	\begin{theorem}\label{MainTheoremPP}
		If $G$ is a graph with minimum degree $\delta \geq 5$, then 
		\[ \operatorname{Maj}'(G) \leq 3. \]
		\begin{proof}
			Let $G=(V,E)$ be a graph with minimum degree $\delta\geq 5$. We shall try to construct a $(1/2-\varepsilon)$-majority edge colouring of $G$ using colours in $\{0,1,2\}$. 
			We shall need to admit an exception, though, at the end of our argument.
			In the first part of the proof, we perform several modifications of our graph in order to choose a special set of edges to be coloured with $0$ in $G$; a small number of additional edges shall possibly be also coloured with $0$ in the concluding part of the proof.
			
			We begin by performing a splitting operation: we split every vertex of $G$ into vertices with degrees at most $6$. More precisely, given any vertex $v$ of $G$, we write its degree as $d(v) = 6(p-1)+i$ where $i\in[6]$, i.e. $p=\lceil d(v)/6\rceil$. We then partition the neighbourhood of $v$ in $G$ into $p$ disjoint sets $N_1, \dotsc, N_p$ such that $|N_1|=i$ and $|N_j|=6$ for $j\geq 2$. Let $G'$ be a graph with vertex set $V' = (V\smallsetminus\{v\}) \cup \{v_1,\dotsc,v_p\}$  and edge set $E' = E(G-v)\cup \bigcup_{i=1}^{p} \{uv_i : u\in N_i\}$, where $v_1,\dotsc,v_p$ are new vertices, which can be thought of as copies of $v$ resulting from its splitting. Note that this operation yields a natural bijection between the edges $E'$ of $G'$ and the edges $E$ of $G$. In addition, 
			\begin{eqnarray}
 			&& \text{while splitting any
		vertex $v$ of degree $8$ in $G$,
			we ensure} \nonumber\\
			&& \text{that its copy $v_1$ of degree $2$ is adjacent to as many vertices} \nonumber\\
			&& \text{which are not of degree $6$ in $G$ as possible.} \label{SplittingCondition}
			\end{eqnarray}
			 In other words, if the corresponding set $N_1$ contains a vertex of degree $6$ in $G$, then all vertices in the associated $N_2$ must be of degree $6$ in $G$. 	

			Let $S$ be a graph constructed from $G$ by applying the above splitting operation to all vertices of $G$, one after another (in arbitrary order). If $S$ contains a vertex of odd degree, we add a new vertex $s$ and join it with an edge with every vertex of odd degree in $S$. Every connected component of the resulting graph $S'$ is Eulerian and therefore admits an Eulerian circuit.
For each component, fix one such circuit, traverse it once, and order the edges according to the
direction of traversal.
			Then remove the added vertex $s$ along with all its incident edges. In the resulting directed graph $D$, every vertex of even degree in $S$ has the same number of incoming and outgoing arcs, while for every vertex of odd degree in $S$ those numbers differ by exactly one. Note again that a natural bijection exists between the arc set of $D$ and the edge set of $S$ (hence also the edge set of $G$).
			
			Let $T$ be the set of vertices of degree $2$ in $S$ which are copies of vertices of degree $8$ in $G$. Note that every connected component of the subgraph induced by $T$ in $S$ is either a cycle or a path (possibly including just one vertex). If a component is a cycle, then remove all of its vertices from $D$. Otherwise, it is a path, which must have become an oriented path in $D$. More precisely, for any such path $t_1t_2\ldots t_r$ in $S$ there are vertices $u$, $w$ such that $\overrightarrow{ut_1},\overrightarrow{t_1t_2},\overrightarrow{t_2t_3},\ldots,\overrightarrow{t_{r-1}t_r},\overrightarrow{t_rw}$ 
			are arcs in $D$.
			For every such extended oriented path (or cycle) $ut_1t_2\ldots t_rw$  in $D$, we remove all its internal vertices $t_1,t_2,\ldots,t_r$. That is, we delete the vertices $t_1,t_2,\ldots,t_r$ and the arcs $\overrightarrow{ut_1},\overrightarrow{t_1t_2},\overrightarrow{t_2t_3},\ldots,\overrightarrow{t_{r-1}t_r},\overrightarrow{t_rw}$ 
and subsequently we add a new arc $\overrightarrow{uw}$. 
Note that this way multiple arcs may arise, as well as loops (for $u=w$). Let $D'=(U,A)$ be the resulting directed multigraph, which admits loops, each of which increases in and out-degrees of its vertex by one. By our construction, the vertices of $D'$ retain the same in and out-degrees as in $D$, and only vertices of degree $2$ in $S$ which are copies of vertices of degree $8$ in $G$, which were present in $D$, are missing in $D'$.
			
			Next, we construct a bipartite multigraph $B=(U'\cup U'', F)$ from $D'$  
			such that $U'$, $U''$ are copies of $U$, say $U'=\{u':u\in U\}$, $U''=\{u'':u\in U\}$, 
			and we include an edge $v'w''$ in $F$ if and only if $\overrightarrow{vw}\in A$. 
			 Note the resulting bipartite multigraph has maximum degree at most $3$, and has no loops. Hence, it admits a proper edge colouring $c$ with colours $\{3,4,5\}$ (see \cite{Konig}). These colours are temporary and serve only defining the auxiliary colouring $c$. Let us remove all edges coloured $3$ from $B$. The resulting bipartite multigraph $R$ is clearly a collection of paths and even cycles. (We may now discard the temporary colours from all the edges.)
We next restore (some of) the previously removed vertices of degree $2$ originating from vertices of degree $8$.
Namely, for every edge $u'w''$ in $R$ corresponding to an arc  $\overrightarrow{uw}$ 
of $D'$ which was obtained from some oriented path $ut_1t_2\ldots t_rw$ in $D$, we remove the edge $u'w''$ 
 and add vertices $t_1,t_2,\ldots, t_r$ along with the edges $u't_1,t_1t_2,t_2t_3,\ldots,t_{r-1}t_r,t_rw''$.
The resulting graph $R'$ is a collection of paths and cycles, possibly including odd length cycles.
			
			Subsequently, we choose a maximum matching $M$ in $R'$ such that every vertex of degree $2$ in $R'$ which did not originate from a vertex of degree $8$ in $G$ is incident with an edge of $M$. 
Such a matching can be easily chosen component by component.
Indeed, consider any connected component $R''$ of $R'$.
If $R''$ is an even length cycle or an odd length path, we may choose its every second edge to be included in $M$; consequently, all vertices of $R''$ are incident with $M$. We proceed the same way if $R''$ is an even length path, leaving a single vertex of degree $1$ in $R''$ without an incident edge in $M$. Finally, if $R''$ is an odd length cycle, it must contain at least one vertex $t$ of degree $2$ originating from a vertex of degree $8$ in $G$. We then choose a maximal matching in $R''$ whose edges are not incident with $t$. Clearly, the resulting $M$ meets our requirements.
			
		It is straightforward to track back that every edge $e$ of $M$ arose from a different edge $e'$ in our original graph $G=(V,E)$ (in other words, there is a naturally defined injection from $M$ to $E$).
		For simplicity, we shall abuse our notation slightly, 
		and denote also by $M$ the set of all the corresponding edges $e'$ in $G$, and similarly -- the set of their counterparts in  
		the split graph $S$ of $G$, etc.
		We colour all these edges with $0$.
		Consider any given vertex $v$ in $S$.
		Note it can be incident with at most two edges coloured $0$ in $S$ (one for every copy of $v$ in $B$).
		Moreover, 
		\begin{eqnarray}
		&&\text{if $v$ has degree $1$ in $S$ or has degree $2$ in $S$}\nonumber\\ 
		&&\text{and originates from a vertex of degree $8$ in $G$, then}\nonumber\\ 
		&&\text{at most one edge coloured $0$ is incident with $v$ in $S$.}\label{One-0-incidence}
		\end{eqnarray}
		 On the other hand, all vertices of degree $3$ in $B$, which necessarily had degree $2$ in $R'$, are guaranteed to have an incident edge coloured $0$. Consequently, every vertex of degree $5$ in $S$ has at least one incident edge coloured $0$, while each vertex of degree $6$ in $S$ has exactly two such edges.
			
			Let us observe that colour $0$ does not violate the $(1/2-\varepsilon)$-majority edge colouring condition in the original graph $G$.
			Indeed, consider any given vertex $v\in V$. 
			Write its degree in $G$ as $d(v) = 6l+i$ where $l$ is a nonnegative integer and $i\in \{ 0,1,2,3,4,5 \}$. (Note that since $\delta\geq 5$, we may have $l=0$ only if $i=5$.) It suffices to show that $v$ is incident with at most $3l+\lfloor\frac{i-1}{2}\rfloor$ edges coloured $0$. 
			If $i=0$, then exactly $2l \leq 3l-1$ edges incident with $v$ are coloured $0$. Analogously, if $i=1$, then at most $2l+1 \leq 3l$ edges incident with $v$ are coloured $0$. 
			In the remaining cases there are at most $2l+2$ such edges.  
			Thus, for $i \in \{3,4\}$, it suffices to note that $2l+2 \leq 3l+1$, while for $i=5$ -- that $2l+2 \leq 3l+2$.
			 Further, if $i=2$ and $l\geq 2$, then $2l+2 \leq 3l$. The only remaining case is $d(v)=8$, but by our construction, in particular~(\ref{One-0-incidence}), such $v$ can be incident with at most $3=3l$ edges coloured $0$. Therefore, in all cases the $(1/2-\varepsilon)$-majority edge colouring condition holds for colour $0$. 
			
			Let $H$ be the subgraph obtained from $G$ by removing all edges of $M$, that is all edges already coloured with $0$. 
			In what follows, we aim to 
			apply Observation~\ref{euler} to colour the edges of $H$ with colours $1$ and $2$. 
			This may require colouring some specific additional edges with $0$.
			Note that for any vertex $v$ of $H$, if $d_G(v) = 6l+i$ with $i\in \{ 0,1,2,3,4,5 \}$, then $d_H(v) \leq 4l+i$ if $i\neq 5$ and $d_H(v) \leq 4l+4$ otherwise. Hence, by directly applying Observation \ref{euler}, at most $2l + \lfloor\frac{i}{2}\rfloor + 1$ edges incident to $v$ would get the same colour in case $v$ is a bad vertex. Otherwise, at most $2l + \lceil\frac{i}{2}\rceil$ edges would receive the same colour if $i \neq 5$, and at most $2l + 2$ if $i=5$. Comparing these bounds with the number $3l+\lfloor\frac{i-1}{2}\rfloor$ of uniformly coloured edges 
			allowed by the $(1/2-\varepsilon)$-majority edge colouring condition, we conclude the following.
			
			If we apply Observation~\ref{euler} to a component of $H$ containing any given vertex $v$, then the $(1/2-\varepsilon)$-majority edge colouring condition holds for $v$ (with respect to all colours, $0$, $1$, $2$),  
				unless $v$ is a bad vertex and one of the following holds:
				\begin{itemize} 
				\item[(i)] $d_H(v)=4$ and $d_G(v)=5$;
				\item[(ii)] $d_H(v)=4$ and $d_G(v)=6$;
				\item[(iii)] $d_H(v)=6$ and $d_G(v)=8$;
				\item[(iv)] $d_H(v)=8$ and $d_G(v)=10$.
			\end{itemize}
			For every connected component $H'$ of $H$ we then proceed as follows. 
			If $H'$ has an even number of edges or contains a vertex of odd degree, then we apply Observation \ref{euler} to $H'$ to colour its edges with colours $1$ and $2$. Otherwise, if $H'$ contains a vertex $v$ for which none of the conditions (i) -- (iv) above holds, then 
			we analogously apply Observation~\ref{euler} to $H'$ choosing $v$ to be a bad vertex.
			By the observations above, the resulting colouring satisfies the $(1/2-\varepsilon)$-majority edge colouring condition for all vertices of $H'$.
			
			We may thus assume that $H'$ has an odd number of edges and conditions (i) -- (iv) hold for all vertices of $H'$. Consequently, $H'$ must contain a vertex $u$ with $d_H(u)=6$ and $d_G(u)=8$.
			
			Suppose first that $u$ has a neighbour $v$ in $H'$ for which (ii) does not hold, i.e. it is one of the types (i), (iii) or (iv). Then we colour $uv$ with $0$ and remove it from $H$. Note that consequently, the $(1/2-\varepsilon)$-majority edge colouring condition still holds for $u$, $v$ and colour $0$. Moreover, the modified $H'$ gains two vertices of odd degree and remains connected (since no graph can have a single vertex of odd degree). Thus, we may apply Observation~\ref{euler} to the resulting component and obtain a 
			colouring with $1$ and $2$ so that the $(1/2-\varepsilon)$-majority edge colouring condition is satisfied for all its vertices and each of the colours $0$, $1$ and $2$.
			
			We may thus finally assume that (ii) holds for all neighbours $v$ of $u$ in $H'$.
			Note that originally, $u$ was split into vertices of degree $6$ and $2$ in $S$. Since $d_H(u)=6$, this means that neither of the edges incident with the copy $u_1$ of $u$ with $d_S(u_1)=2$   
			was coloured with $0$, and hence they join $u$ with vertices of degree $6$ in $G$. 
			However, by our special splitting condition~(\ref{SplittingCondition}), this implies that 
			all the neighbours of $u$ in $G$ have degree $6$ in $G$. In this case, we can no longer guarantee that our colouring is a $(\frac{1}{2}-\varepsilon)$-majority edge colouring for $u$. Instead, we 
			apply Observation~\ref{euler} to $H'$ choosing $u$ to be a bad vertex.
			As a result, $u$ ends up with $4$ incident edges coloured $1$ (or $2$).
			However, by our construction (as an exemption from the $(1/2-\varepsilon)$-majority edge colouring condition shall be admitted only for some vertices of degree $8$ in $G$), all its neighbours shall end up with exactly two incident edges in each of the colours.
			Since any edge between vertices of degree $6$ and $8$ allows $\frac{6+8-2}{2}=6$ edges with the same colour, the resulting colouring shall be a strong majority edge colouring.
		\end{proof}
	\end{theorem}
	
	Note that the last case in the above proof was the only one which prevented obtaining a $(1/2-\varepsilon)$-majority edge colouring of $G$. As it concerned degree $8$ vertices with all neighbours of degree $6$, the same proof implies the following.
	\begin{corollary}
		Let $G$ be a graph with maximum degree $\Delta$ and minimum degree $\delta \geq 7$. Then, for any $\varepsilon \in \left(0; \frac{1}{2\Delta}\right]$, $G$ has a $(1/2-\varepsilon)$-majority edge colouring with $3$ colours.
	\end{corollary}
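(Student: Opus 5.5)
The plan is to rerun the proof of Theorem~\ref{MainTheoremPP} verbatim on a graph $G$ with $\delta\ge7$, and to check that the single step there which produced an exception cannot occur. Throughout, ``$(1/2-\varepsilon)$-majority'' is read as the strict condition $d_c(v)<d(v)/2$ for every vertex $v$ and colour $c$. This is equivalent to $d_c(v)\le(1/2-\varepsilon)d(v)$ for every $\varepsilon\in(0,\frac{1}{2\Delta}]$. To see this, note that $d_c(v)<d(v)/2$ with integers means $d_c(v)\le (d(v)-1)/2$. Then $(d(v)-1)/2\le d(v)/2-\varepsilon d(v)$ holds because $\varepsilon d(v)\le d(v)/(2\Delta)\le 1/2$. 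So it suffices to produce a $3$-colouring in which every vertex and colour satisfy the strict inequality.

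First, I follow the construction unchanged: splitting (including condition~(\ref{SplittingCondition})), the Eulerian orientation, contraction of the paths through degree-$8$ copies, the bipartite multigraph $B$ with K\"onig's colouring, the graphs $R$ and $R'$, and the matching $M$. I colour $M$ with $0$. The counting argument in that proof already shows that colour $0$ meets the strict bound at every vertex. That argument holds for all degrees $\ge5$, hence for $\delta\ge7$.

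Next, I colour each component $H'$ of $H=G-M$ with colours $1,2$ via Observation~\ref{euler}. By the same computation as in the proof, the strict bound can fail only at a bad vertex of one of the types (i)--(iv). Types (i) and (ii) require $d_G(v)\in\{5,6\}$, so they are now impossible. If $H'$ has an even number of edges or an odd-degree vertex, part $(1^\circ)$ gives no bad vertex and we are done. Otherwise, I choose as bad vertex any vertex of $H'$ that is not of type (iii) or (iv). If every vertex of $H'$ is of type (iii) or (iv), I take a vertex $u$ of type (iii) if one exists, or otherwise any vertex, and an arbitrary neighbour $v$ of it in $H'$. Then $v$ is of type (iii) or (iv), i.e.\ not of type (ii). As in the proof, I recolour $uv$ with $0$. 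Both (iii) and (iv) vertices had slack for colour $0$: $d_H(v)=d_G(v)-2$ means only two edges were coloured $0$, and a third one keeps the count at most $3\le3$ for degree $8$, and at most $3\le4$ for degree $10$. Removing $uv$ creates two odd-degree vertices, so part $(1^\circ)$ then applies.

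The only real obstacle is thus the final subcase of the theorem's proof: every neighbour of $u$ is of type (ii). This forced the non-strict exception at a degree-$8$ vertex. That subcase needs a neighbour with $d_G=6$, which is excluded when $\delta\ge7$. Hence all vertices satisfy the strict condition, and the resulting colouring is a $(1/2-\varepsilon)$-majority edge colouring with $3$ colours. The main thing I would double-check is the colour-$0$ slack computation for a degree-$10$ vertex after adding the extra $0$-edge, which I verified above.
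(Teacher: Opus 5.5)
Your proposal is correct and follows the paper's route. Like the paper, you rerun the proof of Theorem~\ref{MainTheoremPP} and observe that its only non-strict exception (a degree-$8$ vertex all of whose $H$-neighbours are of type (ii), hence of degree $6$ in $G$) cannot arise when $\delta\ge 7$. Your extra details are sound: the colour-$0$ slack check at degree-$8$ and degree-$10$ vertices, and treating the all-type-(iv) case directly instead of invoking the parity argument that forces a type-(iii) vertex.
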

		Next observation shows that this result is best possible.
	\begin{observation}
		There exists a graph $G$ with minimum degree $\delta = 6$ which is not $(1/2-\varepsilon)$-majority edge colourable using $3$ colours.
		\begin{proof}
			We construct a graph $G$ as follows. First, take a complete graph on $8$ vertices and remove the edges of any fixed Hamilton cycle from it. Then, add an additional vertex $v$ and connect it with all remaining vertices. Note that $d(v)=8$ and all other vertices are of degree $6$.
			
			Suppose there is a $(1/2-\varepsilon)$-majority edge $3$-colouring of $G$. Note that in such a colouring, a vertex of degree $6$ must have exactly two incident edges in each colour. On the other hand, a vertex of degree $8$ allows at most $3$ incident edges coloured uniformly. Thus, there must exist a colour $\alpha$ such that exactly $3$ edges incident with $v$ are coloured with it. Consider the subgraph of $G$ induced by the edges of $G$ coloured with $\alpha$. This subgraph has eight vertices of degree $2$ and a single vertex of degree $3$, which is not possible.
		\end{proof}
	\end{observation}

	\section{Generalised strong majority edge colourings}\label{GeneralisationSect}
	
	Following multiple natural extensions of the majority colouring concepts, studied e.g. in~\cite{digraph1,MajorityGeneralOur,digraph3,Knox-Samal,Bock},
	we propose the following notion. The colouring $c$ is called a \textit{strong $\frac{1}{k}$-majority edge colouring} if for every colour $\alpha$ and every edge $e$, at most a $1/k$ fraction of the edges adjacent to $e$ have the same colour $\alpha$. If $e=uv$ this implies that at most $(d(u)+d(v)-2)/k$ 
	 edges adjacent to $e$ have the colour $\alpha$. Note such a colouring of a non-empty graph $G$ may exist only if $(d(u)+d(v)-2)/k\geq 1$  for every edge $uv$, which is equivalent to $d(u)+d(v) \geq k+2$.
	 On the other hand, 
	if such a condition holds for every edge $uv$ of $G$, then a strong $\frac{1}{k}$-majority edge colouring always exists, since, in the worst case, one may assign a distinct colour to every edge.
	This in particular implies that a strong $\frac{1}{k}$-majority edge colouring exists for every graph with minimum degree $\delta\geq 1+k/2$.
	
	Using Theorem \ref{thm_galvin_alpha}, we provide below in Theorem~\ref{GenerelThPP} a lower bound for $\delta$ guaranteeing a strong $\frac{1}{k}$-majority edge colouring with the least number of $k+1$ colours; see the next section for further explanations and comments.
	Note this is a direct generalisation of Theorem~\ref{strong_kkpw_main} from~\cite{strong_kkpw}, which covers the case of $k=2$ exclusively.
	To prove it we  shall exploit a natural extension of the previously utilized fact, namely that an $\alpha$-majority edge colouring of a graph 
	$G$ with $\alpha = \frac{1}{k}-\varepsilon$ and $\varepsilon \in \left(0; \frac{1}{k\Delta}\right]$, where $\Delta$ denotes the maximum degree of $G$, is also a strong $\frac{1}{k}$-majority edge colouring of $G$.	
	
	\begin{theorem}\label{GenerelThPP}
		For every integer $k\geq 2$, each graph $G$ with minimum degree $\delta \geq 2k^2+1$ has a strong $\frac{1}{k}$-majority edge colouring with $k+1$ colours.
		\begin{proof}
			Let $G$ be a graph with maximum degree $\Delta$ and minimum degree $\delta \geq 2k^2+1$. 
			Set $\alpha = \frac{1}{k} - \frac{1}{k(k+1)\Delta}$. Then, $\alpha \cdot (k+1) > 1$ and
			\begin{align*}
				\frac{2 \cdot (k+1)-2}{\alpha \cdot (k+1)-1} &= \frac{2(k+1)-2}{(\frac{1}{k} - \frac{1}{k(k+1)\Delta})(k+1)-1} = \frac{2k}{\frac{1}{k}-\frac{1}{k\Delta}} = \frac{2k^2}{1-\frac{1}{\Delta}}\\
				&\leq \frac{2k^2}{1-\frac{1}{2k^2+1}} = 2k^2+1  \leq \delta.
			\end{align*}
			Hence, by Theorem \ref{thm_galvin_alpha}, $G$ has an $\alpha$-majority edge colouring with $k+1$ colours, and thus it has a strong $\frac{1}{k}$-majority edge colouring with $k+1$ colours.
		\end{proof}
	\end{theorem}

\section{Concluding remarks}\label{ConclRem}

Note that a graph $G$ may have a strong majority edge $2$-colouring only if each component of $G$ includes only vertices of even degree or only vertices of odd degree (and sometimes this is still not enough, as exemplified e.g. by cycles $C_n$ with $n$ not divisible by $4$, which require $3$ colours, see~\cite{strong_kkpw}). Thus, our Theorem~\ref{MainTheoremPP} is optimal with respect to the number of colours. 

More generally,
 a graph $G$ may have a strong $\frac{1}{k}$-majority edge colouring with $k$ colours if $d(u)+d(v)-2 \equiv 0 \mod k$ for every edge $uv$ of $G$. Hence, we cannot obtain a counterpart of Theorem~\ref{GenerelThPP} with a smaller number than $k+1$ colours either. 

The following simple observation shows that the lower bound for $\delta$ in Theorem~\ref{GenerelThPP} is of the right order of magnitude.
	\begin{observation}
		For every $k\geq 2$ there exists a graph $G$ with minimum degree $\delta = \frac{1}{2}(k^2-k)$ which is not strong $\frac{1}{k}$-majority edge $(k+1)$-colourable.
		\begin{proof}
			Let $G$ be any graph with minimum degree $\delta = (k^2-k)/2$ which contains an edge $e=uv$ such that $d(u) = \delta$ and $d(v)=\delta+1$. 
			Suppose there is a strong $\frac{1}{k}$-majority edge colouring of $G$ with $k+1$ colours.
			Then, at most $\lfloor \frac{\delta+(\delta+1)-2}{k} \rfloor = \lfloor \frac{k^2-k-1}{k} \rfloor = k-2$ edges adjacent with $e$ can be uniformly coloured. 
			Thus, at most  $(k+1)(k-2) = k^2-k-2$ edges adjacent to $e$ may be coloured at all, which is less than the total number of edges adjacent to $e$; a contradiction.
		\end{proof}
	\end{observation}
	
For $k=2$ we provide a better example, implying near optimal estimate for $\delta$.
	\begin{observation}\label{ObsK34}
	The complete bipartite graph $K_{3,4}$ is not strong majority edge $3$-colourable.
		\begin{proof}
			Let $G=(V,E)$ be the graph $K_{3,4}$. Suppose there exists its strong majority edge colouring with colours $1,2,3$. Since every edge of $G$ is adjacent with exactly $5$ edges, 
			\begin{eqnarray}
			&&\text{at most $2$ edges adjacent to any given edge}\nonumber\\
			&&\text{can be coloured the same.} \label{2edgesCond}
			\end{eqnarray}
			Let $G_i=(V_i,E_i)$ be the subgraph of $G$ induced by the edges coloured with $i$, for $i=1,2,3$.
			Consider any given $G_i$. We first observe that if $G_i$ has at least $4$ edges, then it must be isomorphic to the cycle $C_4$. 
			
			Let $\Delta$ denote the maximum degree of $G_i$. If $\Delta \geq 3$ and $G_i$ is not isomorphic to the star $K_{1,3}$ then we easily get a contradiction with~(\ref{2edgesCond}). If $\Delta = 1$ then $E_i$ is a matching, and hence has at most $3$ elements. It remains to consider the case of $\Delta = 2$. Let $u$ be a vertex of degree $2$ in $G_i$. If there is a vertex $v$ with degree at least $1$ in $G_i$ and $uv\in E$, then $uv \in E_i$, otherwise we would get a contradiction with~(\ref{2edgesCond}) for the edge $uv$. Thus, if $G_i$ has at least $4$ edges, then it has exactly $4$ edges and indeed is isomorphic to $C_4$.
			
Consequently, since $G$ has $12$ edges, it must have exactly $4$ edges in each of the colours. Hence, each colour induces $C_4$ in $G$, but this implies that all vertex degrees in $G$ are divisible by $2$; a contradiction.
		\end{proof}
	\end{observation}

Observation~\ref{ObsK34} implies that the lower bound for $\delta$ in Theorem~\ref{MainTheoremPP} 
cannot be decreased by more than one. We in fact suspected that $4$ is an achievable lower bound for $\delta$, and posed the corresponding conjecture in the first version of this paper. However, McNeil~\cite{McNeil} quickly found the following example (and various other examples) by computer search and informed us of the result.
\begin{claim}[\cite{McNeil}]\label{ObsK45}
	The complete bipartite graph $K_{4,5}$ is not strong majority edge $3$-colourable.
\end{claim}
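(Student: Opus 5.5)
Write $K_{4,5}$ with parts $A$ ($|A|=4$, every vertex of degree $5$) and $B$ ($|B|=5$, every vertex of degree $4$). The plan is to show that the colour degrees forced by the strong majority condition are too small to cover all $20$ edges. Every edge $uv$ has exactly $4+3=7$ adjacent edges, so the condition says that at most $3$ of them share any colour. In terms of colour degrees $d_\alpha$, this reads: for every $u\in A$, $v\in B$ and colour $\alpha$,
\[ d_\alpha(u)+d_\alpha(v)\le 5 \ \text{ if } uv \text{ is coloured } \alpha, \qquad d_\alpha(u)+d_\alpha(v)\le 3 \ \text{ otherwise}. \]
Since the graph is complete bipartite, this constraint applies to \emph{every} pair $(u,v)\in A\times B$. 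That is what makes a purely degree-based argument plausible.

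First I would derive local restrictions. If $d_\alpha(u)\ge 4$ for some $u\in A$, then $u$ has a non-$\alpha$ edge $uv$ with $d_\alpha(u)+d_\alpha(v)\ge 4>3$, which is impossible. If $d_\alpha(u)=3$ for $u\in A$, then the two non-$\alpha$ neighbours $v$ of $u$ satisfy $d_\alpha(v)=0$, and the three $\alpha$-neighbours satisfy $d_\alpha(v)\le 2$. Symmetrically, $d_\alpha(v)=4$ for $v\in B$ forces $d_\alpha(u)\le 1$ on all of $A$. Likewise $d_\alpha(v)=3$ forces $d_\alpha(u)=0$ for the unique non-$\alpha$ neighbour $u$ of $v$, and $d_\alpha(u)\le 2$ for its $\alpha$-neighbours.

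Consequently each $u\in A$ has colour-degree profile $(3,1,1)$, $(3,2,0)$ or $(2,2,1)$, and each $v\in B$ has a profile among $(4,0,0),(3,1,0),(2,2,0),(2,1,1)$.

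Second, I would do a global count per colour. Let $m_\alpha$ be the number of $\alpha$-edges. Summing the inequality over all $20$ pairs gives
\[ 9m_\alpha=\sum_{u,v}\bigl(d_\alpha(u)+d_\alpha(v)\bigr)\le 3\cdot 20+2m_\alpha, \]
so $m_\alpha\le 8$. Since $\sum_\alpha m_\alpha=20$, at least two colours have $m_\alpha\ge 6$, and the slack in the summed inequality is at most $60-7m_\alpha$ (that is, $\le 18$ for $m_\alpha=6$). This means that for a large colour almost every pair must be nearly tight. I would exploit this by splitting into cases on whether some vertex of $A$ has an $\alpha$-degree equal to $3$ (profiles $(3,\ast,\ast)$) or all vertices of $A$ have profile $(2,2,1)$.

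In the first case, a vertex $u$ with $d_\alpha(u)=3$ forces two vertices of $B$ to have $d_\alpha=0$. These zero entries cost slack, and I expect this to push $m_\alpha$ down to about $5$. In the second case all $A$-vertices are $(2,2,1)$, so each $m_\alpha$ is a sum of four values in $\{1,2\}$. Then tightness forces the $B$-side $\alpha$-degrees into a rigid pattern: a non-$\alpha$ pair with $d_\alpha(u)=2$ needs $d_\alpha(v)\le 1$. I expect parity or divisibility arguments, as in Observation~\ref{ObsK34}, to finish this case.

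The main obstacle is this case analysis. The averaging bound $m_\alpha\le 8$ alone leaves room ($24\ge 20$), so the refinement has to extract roughly four units of slack. I would first try a weighted count combining the $A$-profile restrictions with the slack bound $60-7m_\alpha$. If the casework becomes unwieldy, the fallback is an exhaustive computer check in the spirit of~\cite{McNeil}. That check is feasible after using symmetry, assigning profiles to the four $A$-vertices first and then propagating the forced colours on $A\times B$.
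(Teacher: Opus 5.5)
Your proposal does not yet prove the claim. The only bound you establish is $m_\alpha\le 8$, which, as you note, is compatible with $20$ edges. Both branches of the subsequent case analysis are left as expectations (``about $5$'', ``parity or divisibility arguments''), with a computer search as fallback.

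The case split is also not sound as stated. For a fixed colour $\alpha$, the complement of ``some $u\in A$ has $d_\alpha(u)=3$'' is ``all $d_\alpha(u)\le 2$''. It is not ``all $A$-vertices have profile $(2,2,1)$'': a vertex of profile $(3,2,0)$ or $(3,1,1)$ whose $3$ lies in another colour has $d_\alpha(u)\le 2$ without that profile, and may even have $d_\alpha(u)=0$. So the assertion that $m_\alpha$ is a sum of four values in $\{1,2\}$ does not follow. If instead the split is meant across all colours, the first branch controls only the one colour in which $u$ has degree $3$. A smaller slip: $d_\alpha(u)=5$ gives no non-$\alpha$ edge at $u$. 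Exclude it via $5+d_\alpha(v)\ge 6>5$ on any $\alpha$-edge $uv$.

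The missing step is the per-colour bound $m_\alpha\le 6$, which is what the paper's hint asks for; it gives $3\cdot 6=18<20$ at once. Your local restrictions already yield most of it if you sum over the appropriate side instead of averaging over all pairs:
\begin{itemize}
\item if some $v\in B$ has $d_\alpha(v)=4$, then every $d_\alpha(u)\le 1$ and $m_\alpha\le 4$;
\item if some $v\in B$ has $d_\alpha(v)=3$, summing over $A$ gives $m_\alpha\le 0+3\cdot 2=6$;
\item if some $u\in A$ has $d_\alpha(u)=3$, summing over $B$ gives $m_\alpha\le 0+0+3\cdot 2=6$.
\end{itemize}
In the remaining case all $\alpha$-degrees are at most $2$, and you need the idea from Observation~\ref{ObsK34}. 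If $d_\alpha(u)=d_\alpha(v)=2$ with $u\in A$, $v\in B$, then $uv$ must be coloured $\alpha$, since otherwise $2+2>3$. Hence if some $v\in B$ has $\alpha$-degree $2$, it is $\alpha$-adjacent to every $u\in A$ of $\alpha$-degree $2$. So there are $p\le 2$ such $u$, and $m_\alpha=\sum_{u\in A}d_\alpha(u)\le 2p+(4-p)\le 6$. If no $v\in B$ has $\alpha$-degree $2$, then $m_\alpha=\sum_{v\in B}d_\alpha(v)\le 5$. No slack counting, parity argument or computer search is needed.
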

We leave the proof of this fact to the reader -- one might, for example, follow the approach within the proof of Observation~\ref{ObsK34} to show that at most $6$ edges can be coloured uniformly in $K_{4,5}$.
Consequently, our main result, stated in Theorem~\ref{MainTheoremPP}, cannot be improved, even in a weaker version -- restricted to bipartite graphs exclusively.

It is tempting to try to generalise Theorem~\ref{MainTheoremPP} directly to larger values of $k$, and to ask whether every graph with minimum degree $\delta\geq k^2+1$ admits a strong $\frac{1}{k}$-majority edge colouring with $k+1$ colours.
However, we do not feel to have gained a substantial enough insight into the general variant of strong majority edge colourings to dare to pose such a conjecture. Rather than that, we leave this issue as a more generally open problem.
\begin{problem}
Given any fixed integer $k\geq 3$, what is the least $\delta_k$ such that
each graph $G$ with minimum degree $\delta \geq \delta_k$ has a strong $\frac{1}{k}$-majority edge colouring with $k+1$ colours?
\end{problem}

\section*{Acknowledgements}
The authors are grateful to D.~S. McNeil for finding 
the example establishing the sharpness of our result and for bringing it to our attention so promptly.

\end{document}